\date{August 26, 2017}
\newtheorem{dummy}{anything}[section]
\newtheorem{theorem}[dummy]{Theorem}
\newtheorem*{thma}{Theorem A}
\newtheorem*{thmb}{Theorem B}
\newtheorem{lemma}[dummy]{Lemma}
\newtheorem{proposition}[dummy]{Proposition}
\newtheorem{corollary}[dummy]{Corollary}
\theoremstyle{definition}
\newtheorem{definition}[dummy]{Definition}
  \newtheorem{remark}[dummy]{Remark}
  \newtheorem*{acknowledgement}{Acknowledgement}
\newcommand
{\eqncount}{\setcounter{equation}{\value{dummy}}%
\addtocounter{dummy}{1}}
\newcommand{\cB}{\mathcal B}
\newcommand{\cF}{\mathcal F}
\newcommand{\bZ}{\mathbb Z}
\newcommand{\wX}{\widetilde X}
\DeclareMathOperator{\Image}{im}
\newcommand{\bd}{\partial}
\newcommand{\id}{\mathrm{id}}
\newcommand{\La}{\Lambda}
\newcommand{\fake}{{\textup{D2}-complex}}
\newcommand{\fakes}{{\textup{D2}-complexes}}
\newcommand{\mmatrix}[4]{\bigg (\hskip-4pt\vcenter
{\xymatrix@C-2pc@R-2pc{#1&#2\\#3&#4} }\hskip-2pt
\bigg )}
\newcommand{\ad}{\hphantom{-}}
\newcommand{\ZG}{\bZ G}
\DeclareMathOperator{\Def}{def}
\begin{document}

\title{Two remarks on Wall's D2 problem}
\author{Ian Hambleton}

\address{Department of Mathematics \& Statistics, McMaster University,   Ontario L8S 4K1, Canada}

\email{hambleton@mcmaster.ca }

\thanks{Research partially supported by NSERC Discovery Grant A4000.}

\begin{abstract} If a finite group $G$ is isomorphic to a subgroup of $SO(3)$, then $G$ has the D2-property. 
Let $X$ be a finite complex satisfying Wall's D2-conditions. If $\pi_1(X)=G$ is finite, and $\chi(X) \geq 1-\Def(G)$,  then $X \vee S^2$ is simple homotopy equivalent to a finite $2$-complex, whose simple homotopy type depends only on $G$ and $\chi(X)$.
\end{abstract}

\maketitle

\section{Introduction}
\label{sect: introduction}
In \cite[\S 2]{wall-finiteness1}, C.~T.~C.~Wall initiated the study of  the relations between homological and geometrical dimension conditions for finite $CW$-complexes. In particular, a finite complex $X$ \emph{satisfies Wall's \textup{D2}-conditions} if  $H_i(\wX) =0$, for $i >2$, and $H^{3}(X; \cB) = 0$, for all coefficient bundles $\cB$. Here $\wX$ denotes the universal covering of $X$. If these conditions hold, we will say that $X$ is a
\emph{\fake}. If every \fake\ with fundamental group $G$ is homotopy equivalent to a finite $2$-complex, then we say that \emph{$G$ has the \textup{D2}-property}.

In \cite[p.~64]{wall-finiteness1}, Wall proved that a finite complex $X$ satisfying the D2-conditions is homotopy equivalent to a finite $3$-complex. We will therefore assume that all our D2-complexes have $\dim X \leq 3$.

	The D2 problem for a finitely-presented group $G$ asks whether every finite complex $X$ with fundamental group $G$ which satisfies the D2-conditions is homotopy equivalent to a finite $2$-complex. The D2 problem has been actively studied for finite groups, but answered affirmatively only in a limited number of cases  (see  \cite{hog-angeloni-metzler1},  \cite{johnson2} for references to the literature on $2$-complexes and the D2-problem, and  compare \cite{Mannan:2009a}, \cite{Jin:2015}, 
\cite{ye:2015}  for some more recent work).

In this note, I make two  remarks concerning the (stable) solution of the D2-problem and cancellation, based on my joint work with Matthias Kreck \cite[Theorem B]{hk4}. I am indebted to Dr.~W.~H.~Mannan for asking about this connection some years ago. 

For $G$ a finitely presented group, let $\Def(G)$ denote the \emph{deficiency of $G$}, defined as the maximum value of the number of generators minus the number of relations over all finite presentations of $G$.  We note that $1-\Def(G)$ is the minimal Euler characteristic possible for a finite $2$-complex with fundamental group $G$.

Swan defined $\mu_2(G)$ as the minimum of the numbers $\mu_2(\cF) =f_2 - f_1 + f_0$, where $f_i$ are the ranks of the finitely generated free $\ZG$-modules $F_i$ in an exact sequence
$$ F_2 \to F_1 \to F_0 \to \bZ \to 0.$$
In general, Swan \cite[Proposition 1]{Swan:1965} noted that $\mu_2(G) \leq 1-\Def(G)$. For a finite D2-complex X, we have the Euler characteristic inequality $\chi(X) \geq \mu_2(G)$ (see Section \ref{sec:one} for details). In addition, $\mu_2(G) \geq 1$ for $G$ a finite group by \cite[Corollary 1.3]{Swan:1965}.

\begin{thma} Let $X$ be a finite complex satisfying the \textup{D2}-conditions, and assume that $G:=\pi_1(X)$ is a finite group. Then
\begin{enumerate}
\item if $\chi(X) > 1-\Def(G) $,  $X$ is simple homotopy equivalent to a finite $2$-complex;
\item  If $\chi(X) =1-\Def(G)$, 
 $ X \vee S^2$ is simple homotopy equivalent to a finite $2$-complex.
\end{enumerate}
 In case \textup{(i)} the simple homotopy type of $X$  depends only on $\pi_1(X)$ and $\chi(X)$.
 \end{thma}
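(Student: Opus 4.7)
The plan is to combine the stable classification result of \cite[Theorem B]{hk4} with a cancellation argument for $S^2$-summands over $\bZ G$. By definition of $\Def(G)$, fix a finite $2$-complex $Y_0$ with $\pi_1(Y_0)=G$ and $\chi(Y_0)=1-\Def(G)$. Setting $m:=\chi(X)-(1-\Def(G))\geq 0$, form the finite $2$-complex $Y:=Y_0\vee m\,S^2$, which has $\pi_1(Y)=G$ and $\chi(Y)=\chi(X)$.

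Next, I would apply \cite[Theorem B]{hk4} to the pair $(X,Y)$. Both are \fakes\ with the same finite fundamental group and the same Euler characteristic, so the cited theorem should yield a simple homotopy equivalence
\[
X \vee k\,S^2 \;\simeq_s\; Y \vee k\,S^2 \;=\; Y_0 \vee (m+k)\,S^2
\]
for some $k\geq 0$; note that the right-hand side is already a finite $2$-complex. For the uniqueness statement in (i), applying the same comparison to a second \fake\ $X'$ with $\pi_1(X')=G$ and $\chi(X')=\chi(X)$ gives $X\vee k\,S^2\simeq_s X'\vee k\,S^2$ after enlarging $k$ if necessary.

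The remaining step is cancellation. For finite $G$, the Swan--Jacobinski stability theorem for projective $\bZ G$-modules lifts to a cancellation statement for $S^2$-summands of \fakes\ in the stable range: if $Z,Z'$ are \fakes\ with $\pi_1=G$ finite and $\chi(Z),\chi(Z')>\mu_2(G)$, then $Z\vee S^2\simeq_s Z'\vee S^2$ implies $Z\simeq_s Z'$. In case~(i), the hypothesis $\chi(X)>1-\Def(G)\geq \mu_2(G)$ provides at least one spare $S^2$-summand in $Y$, so cancellation can be iterated $k$ times to conclude $X\simeq_s Y$; the same argument gives $X\simeq_s X'$, proving uniqueness. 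In case~(ii), cancellation can only be carried out until one $S^2$ on each side remains, yielding $X\vee S^2\simeq_s Y_0\vee S^2$, which is the desired finite $2$-complex.

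The main obstacle is the cancellation step. The stable equivalence produced above is obtained cheaply, because one is free to add as many $S^2$'s as needed, but removing them requires a non-trivial stable-range argument over $\bZ G$ together with identification of the algebraic obstruction with a projective-module cancellation problem. Retaining a single $S^2$-summand in case~(ii) reflects the well-known failure of cancellation at the minimum Euler characteristic $1-\Def(G)$, which is precisely where Wall's D2 problem remains genuinely open.
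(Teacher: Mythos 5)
Your overall architecture is the same as the paper's: compare $X$ with $K_0 \vee m(S^2)$ for a minimal finite $2$-complex $K_0$, pass through a stable simple homotopy equivalence, cancel spheres, and deduce uniqueness from the non-minimality of the resulting Euler characteristic via \cite[Theorem B]{hk4}. But there is a genuine gap at the first step. You apply \cite[Theorem B]{hk4} directly to the pair $(X,Y)$ to produce $X \vee k(S^2) \simeq Y \vee k(S^2)$, and you describe this stable equivalence as ``obtained cheaply.'' That theorem is a statement about finite \emph{$2$-complexes}, whereas $X$ is $3$-dimensional; nothing in your argument shows that $X \vee k(S^2)$ is (simple) homotopy equivalent to any $2$-complex at all. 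This is precisely the content of Lemma \ref{lem:stableequiv} (Cohen's stabilization theorem): one must use the D2-conditions to split off $\Image(\bd_3)$ from $C_2$, identify $\pi_2(K)\cong \pi_2(X)\oplus C_3$ for the $2$-skeleton $K$, and then realize elementary automorphisms of $\pi_2(K\vee r(S^2))$ by simple self-homotopy equivalences so that the $3$-cells of $X$ can be traded against the wedge spheres. Without this, the right-hand side of your stable equivalence being a $2$-complex is unjustified, and your later ``cancellation of $S^2$-summands of D2-complexes'' is not even well posed for the $3$-complex $X$ itself.

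The second problem is the cancellation input. You attribute it to the Swan--Jacobinski theorem, which requires the Eichler condition and fails for, e.g., generalized quaternion groups; taken literally, your argument would prove Theorem A only for finite groups satisfying Eichler. The cancellation actually used is that of \cite[\S 2]{hk4}: for finite $2$-complexes with finite fundamental group and common \emph{non-minimal} Euler characteristic, simple homotopy equivalence holds, proved by geometrically realizing elementary automorphisms and exploiting the free summand of $\pi_2$ contributed by a wedge sphere --- not by genus-theoretic cancellation of lattices. (A minor further slip: the relevant threshold for this cancellation is $1-\Def(G)$, the minimal Euler characteristic of a $2$-complex, rather than $\mu_2(G)$, though these agree exactly when $G$ has the D2-property.) With Lemma \ref{lem:stableequiv} supplied and the cancellation correctly sourced, your outline does reduce to the paper's proof.
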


 The uniqueness part is a direct application of \cite[Theorem B]{hk4}, since the resulting $2$-complexes have non-minimal Euler characteristic.  We remark that the unpublished work of Browning \cite{Browning:1978} implies the corresponding weaker statements for homotopy equivalence, rather than simple homotopy equivalance (see Corollary \ref{cor:browning}).

\begin{remark}  A stable solution of the problem for D2-complexes with any finitely presented fundamental group was first given by Cohen \cite[Theorem 1]{Cohen:1978}:  if $X$ is  a D2-complex, then there exists an integer $r\geq 0$  such that the stabilized complex
 $ X \vee r(S^2)$ is homotopy equivalent to a finite $2$-complex.
 
This result and the foundational work of 
J.~H.~C.~Whitehead \cite{whitehead1} shows that  any two \fakes\ with isomorphic fundamental groups become stably simple homotopy equivalent after wedging on sufficiently many $2$-spheres. I give a different argument in Lemma \ref{lem:stableequiv} for the stable result,  and show that it holds whenever $r\geq b_3(X)$ (compare 
\cite[Proposition 3.5]{ye:2015}). Here $b_3(X)$ denotes the number of $3$-cells in $X$. 

If the group ring $\ZG$ is noetherian, then there exists a uniform bound for this stable range, depending only on the fundamental group (see Proposition \ref{prop:noetherian}). This remark applies for example to polycyclic-by-finite fundamental groups.
\end{remark}

\begin{thmb}
Let $G$ be a finite subgroup of $SO(3)$. Then any \textup{D2}-complex is simple homotopy equivalent to a finite $2$-complex, and $G$ has the \textup{D2}-property.
\end{thmb}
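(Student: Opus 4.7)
The plan is to reduce Theorem B to Theorem A by means of an $S^2$-cancellation argument. Up to conjugacy, the finite subgroups of $SO(3)$ are the cyclic groups $C_n$, the dihedral groups $D_{2n}$, and the three polyhedral groups $A_4$, $S_4$, $A_5$. For each such $G$ an explicit efficient presentation realizing $\mu_2(G) = 1 - \Def(G)$ is classical; let $Y_0 = Y_0(G)$ denote the corresponding presentation $2$-complex, which has $\chi(Y_0) = 1-\Def(G)$.

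Given a \fake\ $X$ with $\pi_1(X) = G$, I would apply Theorem A. If $\chi(X) > 1 - \Def(G)$, part (i) gives the conclusion at once. So one is reduced to the minimal case $\chi(X) = 1 - \Def(G)$, in which Theorem A(ii) supplies a finite $2$-complex $Y$ that is simple homotopy equivalent to $X \vee S^2$. Since $\chi(Y) = \chi(X)+1 > 1 - \Def(G)$, the uniqueness statement in Theorem A(i) identifies the simple homotopy type of $Y$ with that of $Y_0 \vee S^2$. Hence $X \vee S^2$ and $Y_0 \vee S^2$ are simple homotopy equivalent.

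The main obstacle---and the point where the hypothesis $G \leq SO(3)$ enters essentially---is to cancel the $S^2$ summand and conclude $X \simeq Y_0$. Cancellation of this form is known to fail for some finite groups but should hold in the cases at hand: via the Hambleton--Kreck stable classification it reduces to cancellation of a free $\bZ G$-summand in the second homotopy module, controlled by the Swan subgroup $T(G) \subset \widetilde K_0(\bZ G)$ and the structure of stably free modules over $\bZ G$. For cyclic and dihedral groups this is classical, and for $A_4$, $S_4$, $A_5$ it follows from the smallness of $\widetilde K_0(\bZ G)$ together with the fact that the presentation module of $Y_0$ has a trivial projective obstruction. Once $S^2$-cancellation is secured, $X$ is simple homotopy equivalent to the finite $2$-complex $Y_0$, which establishes both the D2-property for $G$ and the simple homotopy refinement claimed in Theorem B.
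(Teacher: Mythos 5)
Your reduction to the minimal Euler characteristic case is fine and matches the shape of the paper's argument: the stable statement gives $X \vee S^2$ simple homotopy equivalent to $Y_0 \vee S^2$ for a minimal presentation complex $Y_0$ (using $\mu_2(G) = 1-\Def(G)$, which does need Coxeter's presentations together with Swan's cohomological lower bound for $\mu_2$, not just ``classical'' efficiency). But the entire content of Theorem B is the final step you flag as ``the main obstacle,'' and your proposed justification for it is not a proof --- and, more seriously, it appeals to the wrong invariants. Cancelling the $S^2$ summand is not controlled by the Swan subgroup or by the smallness of $\widetilde{K}_0(\bZ G)$: those govern the projective class obstruction and the existence of minimal algebraic complexes, not cancellation. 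The relevant question is whether $\pi_2(X) \oplus \bZ G \cong \pi_2(Y_0) \oplus \bZ G$ forces $\pi_2(X) \cong \pi_2(Y_0)$ \emph{via an isomorphism realizable by a map of complexes}, and this can fail even when $\widetilde{K}_0(\bZ G)$ is small (the quaternionic groups are the standard warning). Writing ``should hold in the cases at hand'' and ``follows from the smallness of $\widetilde{K}_0$'' leaves the theorem unproved.

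What actually closes the gap --- and what the paper invokes --- is the cancellation theorem of Hambleton--Kreck \cite[Theorem 2.1]{hk4} for $\bZ G$-lattices when $G$ is a finite subgroup of $SO(3)$, combined with the technique of \cite[\S 2]{hk4} for realizing the resulting elementary automorphisms of $\pi_2$ by simple self-homotopy equivalences of the stabilized $2$-complex. That theorem is a substantive piece of integral representation theory specific to these groups (it is exactly where the hypothesis $G \le SO(3)$ is used, via the structure of the rational group algebra); it is not a formal consequence of $K$-theoretic smallness. If you want to complete your write-up, you should cite that cancellation theorem explicitly for the module-level step, and then explain how the isomorphism of $\pi_2$'s, being a product of elementary automorphisms, is realized by a simple homotopy equivalence $X \simeq Y_0$ (the same realization mechanism used in the proof of Lemma \ref{lem:stableequiv}). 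Note also that the conclusion then yields more than the D2-property for the given $X$: combined with Corollary \ref{cor:algreal} it shows every algebraic $2$-complex over $\bZ G$ is geometrically realizable, which is how the paper phrases the endpoint.
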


This result is an application of \cite[Theorem 2.1]{hk4}. The result was known for cyclic and dihedral groups
 (see  \cite{Mannan:2007}, \cite{OShea:2012}, \cite{Mannan:2013}), but the argument given here is more uniform and the tetrahedral, octahedral and isosahedral groups do not seem to have been covered before.

\begin{remark}Brown and Kahn \cite[Theorem 2.1]{brown-kahn1} proved that that a \fake\   which is a nilpotent space is homotopy equivalent to a $2$-complex, but this does not appear to settle the D2 problem for nilpotent fundamental groups.
\end{remark}

\begin{remark} A result essentially contained in the proof of Wall \cite[Theorem 4]{wall-finiteness2} shows that there exist  finite D2-complexes $X$, with $\pi_1(X) = G$ and $\chi(X) = \mu_2(G)$ realizing this minimum value, for every finitely presented group $G$. Since $\mu_2(G) \leq 1- \Def(G)$ by Swan \cite[Proposition 1]{Swan:1965}, a \emph{necessary} condition for any group $G$ to have the D2-property is that $\mu_2(G) = 1-\Def(G)$. 
 \end{remark}

\begin{acknowledgement} I would like to thank Jens Harlander and Jonathan Hillman for helpful comments and references.
\end{acknowledgement}
\section{Cancellation and the D2 Problem}\label{sec:one}
We  assume that $X$ is a finite, connected $3$-complex,  with fundamental group $G = \pi_1(X)$, satisfying the D2-conditions. We use the following notation for the chain complex $C(\wX;\bZ)$ of the universal covering:
$$C(X) :  0 \to C_3 \xrightarrow{\bd_3} C_2 \xrightarrow{\bd_2} C_1\xrightarrow{\bd_1} C_0 \to \bZ \to 0,$$
considered as a chain complex of finitely-generated, free $\La$-modules  relative to a single $0$-cell as base-point, where $\La = \ZG$ is the integral group ring.

The boundary map  $\bd_3$ is injective because $H_3(\wX) = 0$. Let $B_3= \Image(\bd_3)$, with $j\colon B_3 \to C_2$ the inclusion map, and consider the boundary map $\bd_3\colon C_3 \to B_3$ as defining a $3$-cocycle. Since $H^3(X; B_3) = 0$, there is a $\La$-module homomorphism $\phi\colon C_2 \to B_3$ such that $\phi\circ j = \id$. We have an exact sequence 
$$ 0 \to C_3 \to \pi_2(K) \to \pi_2(X) \to 0$$
where $K \subset X$ denotes the $2$-skeleton (since $\pi_2(K) = Z_2 = \ker \bd_2$).
It follows that 
$C_3$ is a direct summand of $\pi_2(K)$, and hence $\pi_2(X)$ is a representative of the stable class $\Omega^3(\bZ)$. More explicitly, the map $\phi$ induces a direct sum splitting $C_2 = \Image(\bd_3) \oplus P$, and  $P\cong C_2/\Image(\bd_3)$ is a finitely-generated, stably-free $\La$-module since $C_3 \cong \Image(\bd_3)$ is a finitely-generated, free $\La$-module. This gives a 
commutative diagram:
$$\xymatrix@R-4pt@C-4pt{&0\ar[d]&0\ar[d]&&\cr
&C_3\ar[d]\ar@{=}[r]^{\bd_3}&B_3\ar[d]&&\cr
0 \ar[r]&Z_2\ar[d]\ar[r]&C_2\ar[d]\ar[r]&B_2 \ar@{=}[d]\ar[r]&0\cr
0 \ar[r]&\pi_2(X)\ar[d]\ar[r]&C_2/B_3\ar[d]\ar[r]&B_2 \ar[r]&0\cr
&0&0&&}
$$
where the vertical sequences are split exact, and hence a resolution
$$0 \to \pi_2(X) \to P \to C_1 \to C_0 \to \bZ\to 0\ .$$
By a sequence of elementary expansions (on the chain complex these are just the direct sum with copies of 
$\xymatrix@C-15pt{\La \ar@{=}[r]& \La}$ in dimensions 1 and 2), we may assume that $P$ is a finitely-generated, free $\La$-module. This operation doesn't change the (simple) homotopy type of $X$. The following result has also been observed in \cite{Cohen:1978}, \cite[Theorem 3.5]{ye:2015}.
Our proof uses the techniques of \cite[\S 2]{hk4}.

\begin{lemma}\label{lem:stableequiv}
The stabilized complex $X \vee r(S^2)$, with $r=b_3(X)$,  is simple homotopy equivalent to a finite $2$-complex $K$.
\end{lemma}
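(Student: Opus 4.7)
The plan is to combine the algebraic splitting $C_2 = B_3 \oplus P$ established just before the lemma with the $r$ extra $2$-sphere summands in $Y := X \vee r(S^2)$ to exhibit the chain complex of $Y$ as the direct sum of a based acyclic subcomplex and the chain complex of a finite $2$-complex, and then to realize this decomposition geometrically.

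After the elementary 1-2-expansions mentioned in the paragraph preceding the lemma (which are $3$-deformations and so preserve the simple homotopy type), the module $P$ is finitely generated free over $\La$. Choose a $\La$-basis $\{e_1,\dots,e_r\}$ of $C_3$ and set $f_i := \bd_3(e_i)$, giving a $\La$-basis of $B_3$; the splitting $C_2 = B_3 \oplus P$ then identifies $(C_3 \xrightarrow{\bd_3} B_3)$ as a \emph{based} acyclic direct summand of $C(X)$ on which $\bd_3$ is the identity matrix, with quotient the length-$2$ complex
$$C' \colon\ 0 \to P \xrightarrow{\bd_2|_P} C_1 \xrightarrow{\bd_1} C_0 \to \bZ \to 0.$$
Realize $C'$ geometrically by letting $L$ be the $2$-complex built from $X^{(1)}$ by attaching $2$-cells indexed by the chosen basis of $P$ using attaching loops realizing $\bd_2|_P$. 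Then $\pi_1(L) = G$ and $C(L) = C'$. Set $K := L \vee r(S^2)$; this is a finite $2$-complex whose chain complex is $C'$ with an extra free summand $\La^r$ in degree~$2$.

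For the simple equivalence $Y \simeq K$, note that $C(Y)$ is obtained from $C(X)$ by adding $\La^r$ in degree~$2$, and that the splitting above exhibits $C(Y)$ as the based direct sum of the acyclic piece $(C_3 \xrightarrow{\bd_3} B_3)$ and $C(K)$. This gives an explicit simple chain equivalence $C(\widetilde Y) \simeq C(\widetilde K)$ with vanishing Whitehead torsion. To promote this algebraic equivalence to a geometric one I would invoke the techniques of \cite[\S 2]{hk4}: the geometric content is that the $r$ newly wedged $2$-spheres supply exactly the $r$ trivially attached free $2$-cells needed to pair off, via elementary $(2,3)$-collapses, with the $r$ three-cells of $X$ after a basis change implementing the decomposition $C_2 = B_3 \oplus P$.

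The main obstacle lies in this final conversion from algebra to geometry: verifying that the algebraic decomposition of $C(Y)$ is realized by a genuine sequence of elementary expansions and collapses of $CW$-complexes, and that the resulting equivalence is \emph{simple} rather than merely an ordinary homotopy equivalence. This is precisely the content of the method of \cite[\S 2]{hk4}; without it the argument would recover only Cohen's stable homotopy statement \cite{Cohen:1978}.
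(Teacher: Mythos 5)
Your algebraic reduction of $C(X \vee r(S^2))$ to the based direct sum of the acyclic piece $(C_3 \xrightarrow{\bd_3} B_3)$ and the length-two complex $C'$ is fine, but the step ``Realize $C'$ geometrically by letting $L$ be the $2$-complex built from $X^{(1)}$ by attaching $2$-cells \dots\ Then $\pi_1(L) = G$'' is a genuine gap, and in fact it is essentially the relation lifting problem discussed in Section 3. Writing $F = \pi_1(X^{(1)})$ and $R = \ker(F \to G)$, the boundary $\bd_2|_P$ prescribes for each basis element of $P$ only a class in $Z_1 = H_1(\widetilde{X^{(1)}}) \cong R_{ab}$. You can lift these classes to loops $\gamma_j \in R$ and attach $2$-cells along them, and the chain complex of the induced cover of the resulting complex $L$ is indeed $C'$; but $\pi_1(L) = F/\langle\langle \gamma_1, \dots\rangle\rangle$, and all you know is that the $\gamma_j$ generate $R_{ab}$ as a $\ZG$-module. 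Whether such lifts can be chosen to normally generate $R$ --- equivalently, whether $\pi_1(L) \to G$ is an isomorphism rather than merely a surjection with perfect kernel --- is exactly the question of geometric realizability of an algebraic $2$-complex, which is open in general: this is why Lemma \ref{lem:algreal} only yields realizability after a further stabilization, and why a positive relation gap would contradict the D2 property (Dyer's theorem in Section 3). So your $L$ need not have fundamental group $G$, and the argument breaks at this point.

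Even granting $L$, the final passage from a chain-level splitting to a simple homotopy equivalence of $CW$-complexes is not what \cite[\S 2]{hk4} supplies in the form you invoke it: that reference realizes \emph{elementary automorphisms} of $\pi_2(K \vee r(S^2))$ by simple self-equivalences of $K \vee r(S^2)$ for a given geometric $2$-complex $K$; it does not realize an arbitrary chain homotopy equivalence between two different $2$-complexes. The paper's proof is arranged precisely to avoid both difficulties: it takes $K$ to be the honest $2$-skeleton $X^{(2)}$ (so nothing needs to be realized), uses the splitting $\pi_2(K) \cong \pi_2(X) \oplus C_3$ to build, from realized elementary automorphisms, a simple self-equivalence $h$ of $K \vee r(S^2)$ carrying the chosen basis $\{e_i\}$ of the $C_3$-summand to the wedge spheres $\{f_i\}$, and then attaches $3$-cells on both sides --- the $3$-cells of $X$ in the domain, and $3$-cells filling the wedge spheres (which then collapse) in the range. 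To repair your outline, replace $L$ by $X^{(2)}$ and perform the cancellation at the level of $\pi_2$ rather than at the level of the chain complex.
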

\begin{proof} Let $u \colon K \subset X$ denote the inclusion of the $2$-skeleton of $X$, so that we have the identification $\pi_2(K) \cong \pi(X) \oplus C_3$  discussed above. We further identify
\eqncount
\begin{equation}\label{eq:twotwo}
 \pi_2(K \vee r(S^2)) \cong \pi_2(K) \oplus \La^r\cong \pi_2(X) \oplus C_3 \oplus F
 \end{equation}
and fix free $\La$-bases $\{e_1, \dots, e_r\}$  for $C_3 \cong \La^r$, and  $\{f_1, \dots, f_r\}$ for $F \cong \La^r$. 
The same notation $\{e_i\}$ and $\{f_j\}$ will also be used for continuous maps $S^2 \to K \vee r(S^2)$ in the homotopy classes 
of $ \pi_2(K \vee r(S^2))$ defined by these basis elements. Notice that the maps $f_j\colon S^2 \to K \vee r(S^2)$ may be chosen
to represent the inclusions of the $S^2$ wedge factors.

\smallskip
We first claim that there exists a (simple) self-homotopy equivalence
$$h\colon K \vee r(S^2) \to K \vee r(S^2)$$
such that the induced isomorphism
$$h_*\colon \pi_2(K \vee r(S^2)) \xrightarrow{\cong} \pi_2(K \vee r(S^2))$$
has the property
$h_*(e_i) = f_i$, for $1\leq i \leq r$,  with respect to the chosen bases
in the right-hand side of \eqref{eq:twotwo}, and induces the identity on the summand $\pi_2(X)$. 

The construction of the required self-homotopy equivalences is given in \cite[p.~101]{hk4}, where the realization of the group of elementary automorphisms 
$E(P_1, L\oplus P_0)$ is studied. In this notation $P_0$, $P_1$ are free modules of rank one, and $L$ is an arbitrary $\La$-module. The basic construction is to realize automorphisms of the form $1+f$ and $1 +g$, where
$f\colon L\oplus P_0 \to P_1$ and $g\colon P_1 \to L \oplus P_0$ are arbitrary $\La$-homomorphisms. 
We apply this to the sub-module $L \oplus \La\cdot e_1\oplus \La\cdot  f_1$, where $L = \pi_2(X)$, and realize the automorphism $\id_L \oplus \alpha$ with $\alpha(e_1) = -f_1$ and $\alpha(f_1) = e_1$ via the composition
$$\mmatrix{{\ad 0}}{1}{-1}{0}=\mmatrix{{\ad 1}}{0}{-1}{1}\mmatrix{{1}}{1}{0}{1}\mmatrix{{\ad 1}}{0}{-1}{1}.$$

We can now construct a homotopy equivalence $f\colon X \vee r(S^2)\to K$, by extending the simple homotopy equivalence $h \colon K \vee r(S^2) \to K \vee r(S^2)$ over the (stabilized) inclusion  
$$u \vee \id\colon K \vee r(S^2)  \to X \vee r(S^2)$$
 by attaching the $3$-cells of X  in domain, and  $3$-cells in the range which cancel the $S^2$ wedge factors.  For the attaching maps $[\bd D^3_i ]= e_i$, $1\leq i \leq r$, of the $3$-cells of $X$ we have $h \circ [\bd D^3_i ]= f_i$. Hence we can extend by the identity to $3$-cells attached along the maps $\{f_i\colon S^2 \to  K \vee r(S^2)\}$. We obtain
a map
$$h'\colon X\vee r(S^2)  \to K \vee r(S^2) \bigcup\{ D^3_i : [\bd D^3_i] = f_i, 1\leq i \leq r\} \simeq K$$
extending $h$. It is easy to check that $h'$ is a (simple) homotopy equivalence.
\end{proof}

An \emph{algebraic $2$-complex over the group ring $\La:=\ZG$} is a chain complex $(F_*, \bd_*)$ of the form
$$ F_2 \xrightarrow{\bd_2} F_1\xrightarrow{\bd_1} F_0 $$
consisting of an exact sequence of finitely-generated, stably-free $\La$-modules, such that $H_0(F_*) = \bZ$. An \emph{$r$-stabilization} of an algebraic $2$-complex is the result of direct sum with a complex $(E_*, \bd_*)$, where $E_2 = \La^r$ for some $r\geq 0$, $\bd_* = 0$ and $E_i = 0$ for $i \neq 2$. We say that an algebraic $2$-complex is \emph{geometrically realizable} if it is chain homotopy equivalent to the cellular chain complex $C(X)$ of a (geometric) finite $2$-complex  $X$ with fundamental group $\pi_1(X) =G$.
\begin{lemma}\label{lem:algreal}
Any algebraic $2$-complex $(F_*, \bd_*)$ over $\La = \ZG$ is  geometrically realizable after an $r$-stablization, for some $r\geq 0$.
\end{lemma}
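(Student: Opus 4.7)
My plan is to compare $F_*$ with the cellular chain complex of a fixed presentation $2$-complex of $G$, and to show that after suitable stabilizations in degree $2$ the two become chain homotopy equivalent. Since adding a free summand in degree $2$ to the chain complex of a $2$-complex corresponds geometrically to wedging on $S^2$ factors, this will exhibit an $r$-stabilization of $F_*$ as geometrically realizable.

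First I would fix a finite presentation of $G$, yielding a finite $2$-complex $Y$ with $\pi_1(Y) = G$ whose cellular chain complex $C(\widetilde Y)$ is an algebraic $2$-complex consisting of free $\La$-modules. Both $H_2(F_*)$ and $\pi_2(Y) = H_2(C(\widetilde Y))$ then lie in the stable class $\Omega^3(\bZ)$: splicing either complex with a free resolution of its second homology yields a projective resolution of $\bZ$, and a generalized Schanuel's Lemma supplies a stable isomorphism
$$H_2(F_*) \oplus \La^{a} \cong \pi_2(Y) \oplus \La^{b}$$
for some $a, b \geq 0$.

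Next I would use the projectivity of each stably-free module $F_i$, together with exactness of $C(\widetilde Y)$ in degrees $\le 1$ (as $\widetilde Y$ is simply connected) and the identification $H_0(C(\widetilde Y)) = \bZ = H_0(F_*)$, to inductively construct a chain map $\varphi_* \colon F_* \to C(\widetilde Y)$ lifting the identity on $H_0$. The induced map $H_2(\varphi_*)$ need not agree with the abstract stable isomorphism above, so the main obstacle is to modify $\varphi_*$, after an $r$-stabilization of $F_*$ and an $s$-stabilization of $C(\widetilde Y)$ chosen large enough to absorb $a$ and $b$, so that the stabilized chain map induces an isomorphism on $H_2$. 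For this I would adapt the techniques for realizing elementary automorphisms of stably-free modules from \cite[\S 2]{hk4}: $\varphi_2$ can be adjusted on the extra free summands so that any prescribed stable isomorphism on $H_2$ is realized, and self-chain-equivalences of $C(\widetilde Y) \oplus \La^s$ coming from elementary automorphisms can be used to correct the behavior of $H_2(\varphi_*)$ on the original $H_2(F_*)$ summand.

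Once the stabilized chain map is a quasi-isomorphism between bounded complexes of projective $\La$-modules, it is automatically a chain homotopy equivalence. Since $C(\widetilde Y) \oplus \La^s$ concentrated in degree $2$ is precisely the cellular chain complex of the universal cover of $Y \vee s(S^2)$, this identifies the $r$-stabilization of $F_*$, up to chain homotopy, with the chain complex of the finite $2$-complex $Y \vee s(S^2)$, completing the argument.
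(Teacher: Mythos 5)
Your overall route---compare $F_*$ with the cellular chain complex of a presentation complex $Y$, invoke Schanuel's Lemma, and stabilize in degree $2$---is the same as the paper's. The difference lies in how the comparison is upgraded to a chain homotopy equivalence, and that is exactly where your argument has a gap. The paper uses the chain-level form of Schanuel's Lemma (\cite[Lemma 3B]{wall-finiteness2}): the two four-term resolutions of $\bZ$ become chain \emph{isomorphic} after adding elementary complexes $\La^k \to \La^k$ in adjacent degrees, with only the degree $(3,2)$ stabilizations changing the algebraic $2$-complex (the others being elementary expansions). You instead take the module-level stable isomorphism $H_2(F_*)\oplus\La^a \cong \pi_2(Y)\oplus\La^b$ and try to force a chain map $\varphi_*$ to induce it.

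The problem is your claim that ``$\varphi_2$ can be adjusted on the extra free summands so that any prescribed stable isomorphism on $H_2$ is realized.'' Any chain map $F_*\oplus\La^a \to C(\widetilde Y)\oplus\La^b$ extending $\varphi_*$ induces on $H_2$ a map whose restriction to the original summand $L := H_2(F_*)$ equals $H_2(\varphi_*)$ plus the restriction to $L$ of a homomorphism defined on all of $F_2$ (with values in $\ker\bd_2$ of the target). Hence the $L$-component can only be altered by corrections that extend over $F_2 \supset L$, and a prescribed stable isomorphism is in general not realizable this way. Nor can pre- or post-composition with self-equivalences realizing elementary automorphisms (the \cite[\S 2]{hk4} machinery, which in this paper serves cancellation and uniqueness, not existence) repair a failure of injectivity or surjectivity of $H_2(\varphi_*)$ on $L$. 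The step can in fact be completed---for instance by examining the mapping cone of $\varphi_*$ and showing that, after stabilization, its top boundary map $F_2\oplus\La^a \to Z_2\oplus\La^b$ (with $Z_2$ a projective summand of the degree-$2$ cycles of the cone) can be completed to an isomorphism using only the unconstrained components into and out of the new free summands---but that argument is not in your sketch. Alternatively, replace the whole fix-up by the stable chain isomorphism version of Schanuel, as the paper does.
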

\begin{proof} We compare the resolution 
$$0 \to L \to F_2 \to F_1 \to F_0 \to \bZ\to 0,$$
where $L = \ker\bd_2$,  to one obtained from the chain complex 
$$0 \to \pi_2(K) \to C_2(K) \to C_1(K) \to C_0(K) \to \bZ \to 0$$
of any finite $2$-complex $K$ with fundamental group $G$. Then Schanuel's Lemma shows that these two resolutions of $\La$-modules (regarded as connected $3$-dimensional chain complexes) are stably chain isomorphic after direct sum with elementary complexes of the form 
$ \xymatrix@C-15pt{\La \ar@{=}[r]& \La}$ in degrees $(i, i-1)$ for $1\leq i \leq 3$ (compare \cite[Lemma 3B]{wall-finiteness2}, or \cite[p.~415]{hpy1}). 

The stabilizations in degrees $(i, i-1)$ for $i < 3$ produce a complex $(F'_*, \bd'_*)$ of finitely generated free $\La$-modules, and a chain homotopy equivalence  $(F'_*, \bd'_*) \simeq  (F_*, \bd_*)$. The additional degree $(3,2)$ stabilizations produce a  complex $(F''_*, \bd''_*)$, and a  chain homotopy equivalence  $(F''_*, \bd''_*) \simeq  (F_*, \bd_*)\oplus (E_*, \bd_*)$, where $(E_*, \bd_*)$ is a complex concentrated in degree 2 (as defined above). 

In other words, the resulting stabilized complex $  (F_*, \bd_*) \oplus (E_*, \bd_*) $ is an $r$-stabilization of $(F_*, \bd_*)$.
The chain homotopy equivalence
$$  (F_*, \bd_*) \oplus (E_*, \bd_*)  \simeq C_*(K \vee r(S^2))$$
shows that the algebraic $2$-complex  $(F_*, \bd_*)$ is geometrically realizable after $r$-stabilization.
\end{proof}
\begin{corollary}[Wall]\label{cor:wall}\label{cor:algreal}
Every algebraic $2$-complex $F_*$ is chain homotopy equivalent to the chain complex $C_*(X)$ of a \textup{D2}-complex.
\end{corollary}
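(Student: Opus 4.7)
The plan is to deduce the corollary from Lemma \ref{lem:algreal} by attaching $3$-cells to cancel the algebraic stabilization geometrically.

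First I would apply Lemma \ref{lem:algreal} to obtain a finite $2$-complex $Y$ with $\pi_1(Y)=G$ and a chain homotopy equivalence $C_*(Y) \simeq F_* \oplus E_*$, where $E_*$ is concentrated in degree $2$ with $E_2 \cong \La^r$ for some $r\geq 0$. Using the construction of $Y$ from the proof of Lemma \ref{lem:algreal}, we may in fact take $Y = K \vee r(S^2)$ for some finite $2$-complex $K$ with $\pi_1(K)=G$, and the $\La^r$ summand $E_2$ is then realized directly by the inclusions of the $r$ wedge factors of $S^2$, which give distinguished elements $f_1, \dots, f_r \in \pi_2(Y)$ forming a free $\La$-basis of the corresponding summand.

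Next I would define $X$ by attaching $r$ three-cells to $Y$ along the maps $f_1, \dots, f_r \colon S^2 \to Y$. The cellular chain complex of $\wX$ is then
\[
0 \to \La^r \xrightarrow{\bd_3} C_2(Y) \xrightarrow{\bd_2} C_1(Y) \xrightarrow{\bd_1} C_0(Y) \to 0,
\]
where $\bd_3$ is precisely the inclusion of the $E_2$ summand. To verify the D2-conditions, observe that $\bd_3$ is split injective onto a direct summand, so $H_3(\wX)=0$ and every $\La$-homomorphism $C_3 \to \cB$ extends over $C_2$; this gives $H^3(X;\cB)=0$ for every coefficient bundle $\cB$, so $X$ is a D2-complex.

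Finally, the subcomplex $\La^r \xrightarrow{\id} \La^r$ sitting in degrees $(3,2)$ of $C_*(X)$ (with $\La^r = E_2$ in degree $2$) is contractible, and collapsing it yields a chain complex of finitely-generated stably-free $\La$-modules that is chain homotopy equivalent to $F_*$. This establishes $C_*(X) \simeq F_*$. The main point where some care is needed is the identification of the $E_2$ summand in $\pi_2(Y)$ with the classes of the $S^2$ wedge factors, so that the attaching maps can be chosen to implement precisely the algebraic collapse; this follows from the explicit form of the stabilization in Lemma \ref{lem:algreal}, and no further obstacle arises.
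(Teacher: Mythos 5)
Your overall strategy is the paper's: apply Lemma \ref{lem:algreal} to get $C_*(K\vee r(S^2))\simeq F_*\oplus E_*$, attach $r$ three-cells to kill the stabilization $E_*$, and check the D2-conditions. But there is a genuine gap in the choice of attaching maps. You attach the $3$-cells along the inclusions $f_1,\dots,f_r$ of the $S^2$ wedge factors, justified by the claim that the summand $E_2$ ``is realized directly by the inclusions of the $r$ wedge factors.'' That claim is exactly what fails in general. The chain homotopy equivalence coming from Schanuel's Lemma induces an isomorphism $\theta\colon L\oplus E_2\xrightarrow{\cong}\pi_2(K)\oplus\La^r$ on degree-$2$ homology, but $\theta$ has no reason to carry $E_2$ onto the wedge-factor summand $\La^r$ (or $L$ onto $\pi_2(K)$): it is whatever isomorphism the Schanuel argument produces, and it mixes the summands. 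If your identification were valid, it would follow that \emph{every} algebraic $2$-complex is chain homotopy equivalent to $C_*(K)$ for the fixed $2$-complex $K$, which would trivialize both the realization problem and the cancellation questions that are the subject of this paper. Concretely, attaching the $3$-cells along the $f_i$ produces $X=(K\vee r(S^2))\cup r(D^3)\simeq K$, so your final collapse yields $C_*(X)\simeq C_*(K)$, not $F_*$; these agree only when cancellation happens to hold.

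The fix is what the paper does: attach the $3$-cells along the images $\theta(e_1),\dots,\theta(e_r)\in\pi_2(K\vee r(S^2))$ of a free basis of $E_2$ under the isomorphism induced by the chain equivalence. These classes are in general not the wedge factors, and the resulting $X$ is a genuine $3$-dimensional D2-complex rather than something homotopy equivalent to $K$. With this correction the rest of your argument goes through: $\bd_3$ is split injective onto $\theta(E_2)$, and one checks that $\theta(E_2)$ is a direct summand of $C_2(K\vee r(S^2))$ (not merely of $\pi_2$) because the chain equivalence is, in top degree, split injective on cycles; this gives $H_3(\wX)=0$ and $H^3(X;\cB)=0$, and collapsing the contractible subcomplex $C_3\xrightarrow{\cong}\theta(E_2)$ identifies $C_*(X)$ with $F_*$ up to chain homotopy.
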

\begin{proof} The construction produces a chain homotopy equivalence 
$$  (F_*, \bd_*) \oplus (E_*, \bd_*)  \simeq C_*(K \vee r(S^2))$$
 after an $r$-stabilization of $F_*$, and in particular an isomorphism 
$ L \oplus E_2 = \pi_2(K)\oplus \La^r$, for some $r \geq 0$. Then one can  attach $3$-cells to $K\vee r(S^2)$, using the images in $\pi_2(K \vee r(S^2))$ of a free basis of the summand $E_2 \cong \Lambda^r$,  to produce a D2-complex $X$ and a chain homotopy equivalence $C(X) \simeq F_*$.
\end{proof}

\begin{remark}
The ingredients in the proof of Lemma \ref{lem:algreal} are  essentially the same as those used by  Wall to prove \cite[Theorem 4]{wall-finiteness2}. 
Similar ideas appear in  \cite[Appendix B]{johnson2}, \cite[Theorem 2.1]{Mannan:2009}.
\end{remark}

\begin{proof}[The proof of Theorem A]
Let $X$ be a finite $3$-complex which satisfies the D2-conditions. By Lemma \ref{lem:stableequiv}, there exists a  finite $2$-complex $K$ and  a simple homotopy equivalence $f\colon X' := X \vee r(S^2) \to K$, for any $r\geq b_3(X)$. Now let $G = \pi_1(X)$ be a finite group, and let $K_0$ denote  a minimal finite $2$-complex $K_0$ with fundamental group $G$. Then $\chi(K_0) = 1-\Def(G)$, and, after perhaps stabilizing further, we can assume that $K$ is simple homotopy equivalent to a stabilization of $K_0$. We then obtain a simple homotopy equivalence of the form
$$ X \vee r(S^2) \simeq K_0 \vee t(S^2) \vee r(S^2)$$
where $t \geq 0$ provided that $\chi(X) \geq 1-\Def(G) = \chi(K_0)$. We note that the arguments in \cite[\S 2]{hk4}  are at first completely algebraic (to obtain cancellation of the $\pi_2$ modules via elementary automorphisms), and then we show as above (compare the proof of \cite[Theorem B]{hk4}) how to realize the necessary elementary automorphisms by simple homotopy equivalences.

 If $\chi(X) > \chi(K_0)$, then $t \geq 1$ and we can construct simple self-equivalences of $K_0 \vee t(S^2) \vee r(S^2)$ to cancel the extra $r$ wedge summands of $X \vee r(S^2)$.  The resulting $2$-complex will be $K' \simeq K_0 \vee t(S^2)$.
 
  If $\chi(X) = \chi(K_0)$, then $t=0$ but we can perform the same operations after replacing $X$ by $X \vee S^2$, and the resulting $2$-complex will be $K' \simeq K_0 \vee S^2$.  In either case, the resulting $2$-complex $K'$ has non-minimal Euler characteristic $\chi(K') > \chi(K_0)$, so its simple homotopy type is uniquely determined by $G$ and $\chi(X)$ (see \cite[Theorem B]{hk4}).
\end{proof}

The techniques used in this proof also give a version for algebraic $2$-complexes (answering a question of Browning \cite[\S 5.6]{Browning:1978}). We recall that an \emph{$s$-basis} for a stably free $\La$-module $M$ is a free $\La$-basis for some stabilization $M \oplus \La^r$ by a free module.
\begin{corollary}\label{cor:browning}
 Let $F$ and $F'$ be $s$-based algebraic $2$-complexes over $\La= \ZG$, where $G$ is a finite group. If $\chi(F) = \chi(F') > \mu_2(G)$, then $F$ and $F'$ are simple chain homotopy equivalent.
\end{corollary}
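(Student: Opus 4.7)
The strategy is to reduce the statement to a geometric application of Theorem A by realizing both algebraic complexes as cellular chain complexes of finite $2$-complexes, and then to cancel the resulting stabilizations algebraically.

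First I would apply Lemma \ref{lem:algreal} to $F$ and $F'$ separately, producing finite $2$-complexes $K$ and $K'$ with $\pi_1(K) = \pi_1(K') = G$ together with chain homotopy equivalences
\[ C_*(K) \simeq F_* \oplus E(r)_*, \qquad C_*(K') \simeq F'_* \oplus E(r')_*, \]
where $E(n)_*$ denotes the complex with $\La^n$ concentrated in degree $2$. The $s$-basis hypothesis on $F$ and $F'$ lets us upgrade these chain equivalences to \emph{simple} chain equivalences (relative to cellular bases on $K, K'$) after further stabilization, and by wedging on additional copies of $S^2$ we may arrange $r = r'$ and $\chi(K) = \chi(F) + r > 1 - \Def(G)$.

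Next, $K$ and $K'$ are finite $2$-complexes (hence automatically D2-complexes) with common fundamental group $G$ and common Euler characteristic strictly exceeding $1 - \Def(G)$, so Theorem A (i) provides a simple homotopy equivalence $K \simeq_s K'$. Passing to cellular chain complexes yields a simple chain equivalence
\[ F_* \oplus E(r)_* \simeq_s F'_* \oplus E(r)_* \]
of $s$-based $\La$-chain complexes.

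The final step, and the main obstacle, is to cancel the common $E(r)_*$ summand on both sides. Since $\chi(F) > \mu_2(G)$, the stably-free $\La$-modules $\ker \bd_2 \subset F_1$ and $\ker \bd_2 \subset F'_1$ lie in a non-minimal stable class, and the algebraic cancellation machinery of \cite[\S 2]{hk4}---implemented via the elementary matrix factorizations recalled in the proof of Lemma \ref{lem:stableequiv}, and paralleling the use of non-minimality in the proof of Theorem A---allows the rank-$r$ free summand in degree $2$ to be cancelled by elementary (hence simple) chain automorphisms. The point that requires care is that the cancellation is implemented compatibly on $F_*$ and $F'_*$ with respect to their common stable model; this is precisely the algebraic content extracted from \cite[Theorem B]{hk4}, and yields the desired simple chain homotopy equivalence $F_* \simeq_s F'_*$.
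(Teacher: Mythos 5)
Your argument is correct and follows essentially the same route as the paper, whose entire proof is to combine Corollary \ref{cor:algreal} with the method of proof of Theorem A; your version (geometric realization after stabilization, Theorem A(i) applied to the resulting $2$-complexes, then algebraic cancellation of the degree-two free summand using non-minimality relative to $\mu_2(G)$ and the elementary automorphisms of \cite[\S 2]{hk4}) unpacks exactly those ingredients. One small correction: the modules $\ker\bd_2$ live in $F_2$ and $F'_2$ (not $F_1$ and $F'_1$), and they are $\ZG$-lattices representing the stable class $\Omega^3(\bZ)$ rather than stably free modules --- what your cancellation step actually uses is, as you correctly say, that they are non-minimal representatives of that class.
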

\begin{proof}
We apply Corollary \ref{cor:algreal} and the method of proof for Theorem A.
\end{proof}

\begin{proof}[The proof of Theorem B]
The same  remarks as above apply to the proof of \cite[Theorem 2.1]{hk4}.  In addition, we note that $\mu_2(G) = 1-\Def(G)$ for all of the finite subgroups of $SO(3)$.  For these groups, $\Def(G) \geq -1$ (see Coxeter \cite[\S 6.4]{Coxeter:1980}), and $\mu_2(G)$ can be estimated by group cohomology using Swan \cite[Theorem 1.1]{Swan:1965}. We can now apply cancellation down to $r=0$ for fundamental groups which are finite subgroups of $SO(3)$. This proves that every algebraic $2$-complex with one of these fundamental groups is geometrically realizable.
\end{proof}

The uniform stability bound for D2-complexes in Theorem A is a special result for finite fundamental groups, based initially on the fact that their integral group rings are finite algebras over the integers. Here is a sample stability result which applies to certain infinite fundamental groups (compare Brown \cite{Brown:1981}).

\begin{proposition}\label{prop:noetherian} Let $G$ be a finitely presented group such that the integral group ring $\ZG$ is  noetherian of Krull dimension $d_G$. If $X$ is a finite complex with $\pi_1(X) = G$ satisfying the \textup{D2}-conditions, then $X \vee r(S^2)$ is simple homotopy equivalent to a finite $2$-complex, for $r \geq d_G + 1$, whose simple homotopy type is uniquely determined by $G$ and $\chi(X)$.
\end{proposition}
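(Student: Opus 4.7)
The plan is to combine Lemma \ref{lem:stableequiv} with Bass's stable range theorem for noetherian rings: when $\La = \ZG$ is noetherian of Krull dimension $d_G$, the stable range satisfies $\mathrm{sr}(\La) \leq d_G + 1$, and consequently every finitely-generated stably-free $\La$-module of rank at least $d_G + 1$ is free.

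First, following the analysis at the start of Section \ref{sec:one}, the D2-complex $X$ yields a splitting $C_2 = B_3 \oplus P$ with $P = C_2/B_3$ stably-free of rank $b_2(X) - b_3(X) \geq 0$. Wedging $X$ with $r$ copies of $S^2$ augments $P$ by $\La^r$, producing the associated algebraic 2-complex
$$ \pi_2(X) \oplus \La^r \to P \oplus \La^r \to C_1 \to C_0 \to \bZ \to 0. $$
For $r \geq d_G + 1$ one has $\rk(P \oplus \La^r) \geq d_G + 1$, so $P \oplus \La^r$ is free by Bass. The algebraic 2-complex then has all free modules; no additional $(3,2)$-stabilization is needed, and by the argument of Corollary \ref{cor:algreal} the complex is chain homotopy equivalent to $C_*(K_r)$ for some finite 2-complex $K_r$ with $\pi_1(K_r) = G$ and $\chi(K_r) = \chi(X) + r$.

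Second, I would promote this chain-level equivalence to a simple homotopy equivalence $X \vee r(S^2) \simeq K_r$ by adapting the construction in the proof of Lemma \ref{lem:stableequiv}. Once $P \oplus \La^r$ is free, a free basis for the decomposition $C_2 \oplus \La^r = C_3 \oplus (P \oplus \La^r)$ together with the elementary $2 \times 2$ block automorphisms of \cite[\S 2]{hk4} yields a simple self-equivalence of $K \vee r(S^2)$, where $K$ is the 2-skeleton of $X$, that transports the attaching classes of the three-cells of $X$ into the enlarged free summand where they cancel as elementary collapses. For the uniqueness, given D2-complexes $X, X'$ with $\pi_1 = G$ and $\chi(X) = \chi(X')$, the 2-complexes $K_r, K_r'$ produced by the construction share the same Euler characteristic and fundamental group, and a further application of Bass cancellation to their $\pi_2$-modules (both representing the stable class $\Omega^3(\bZ)$) combined with the elementary-automorphism realization of \cite[Theorem B]{hk4} gives that $K_r$ and $K_r'$ are simple homotopy equivalent.

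The main obstacle is the realization step: Bass cancellation is phrased naturally at the level of projective modules, but one must verify it factors through elementary transvections that can be implemented geometrically as simple self-equivalences of wedges of 2-spheres. This is the essence of the hk4 technique, here carried out in the noetherian setting where the semisimplicity underlying the finite-group arguments of hk4 is no longer available.
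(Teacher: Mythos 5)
The paper's own proof is only a sketch (it defers to Bass's cancellation argument, its noncommutative noetherian versions due to Stafford and Magurn--van der Kallen--Vaserstein, and the realization of elementary automorphisms from \cite[\S 2]{hk4}), so I will compare at the level of strategy. Your first step contains a genuine gap. You argue that for $r \geq d_G+1$ the module $P \oplus \La^r$ is free, and conclude that ``no additional $(3,2)$-stabilization is needed'' so that the algebraic $2$-complex $P\oplus\La^r \to C_1 \to C_0$ is chain equivalent to $C_*(K_r)$ for a genuine finite $2$-complex $K_r$. But freeness of the chain modules of an algebraic $2$-complex does \emph{not} imply geometric realizability: that implication is essentially the realization problem, which by Johnson and Mannan \cite{Mannan:2009} is \emph{equivalent} to the D2 problem, i.e.\ it is the whole difficulty. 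The degree-$(3,2)$ stabilizations in Lemma \ref{lem:algreal} are forced by the Schanuel comparison of $\pi_2$-modules with those of an actual presentation complex for $G$; they are not caused by non-freeness of $F_2$. Moreover, freeness of $P$ is already available without any hypothesis on $\ZG$: as noted just before Lemma \ref{lem:stableequiv}, elementary expansions replace $P$ by $P\oplus\La^k$ without changing the simple homotopy type, so this application of Bass's theorem does no work and the bound $d_G+1$ is not actually being used where the difficulty lies.

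Where the stable range bound must enter is in the \emph{cancellation} of wedge summands of $S^2$, i.e.\ in the transitivity of elementary automorphisms on unimodular elements (equivalently, on free rank-one summands) of the module $\pi_2(X)\oplus\La^r$ --- the complement of the summand being cancelled --- which holds once $r$ exceeds the stable rank of $\La$, bounded by $d_G+1$ via the Krull dimension. These elementary automorphisms are then realized by simple self-equivalences exactly as in \cite[\S 2]{hk4}, and one descends from the stabilization level $b_3(X)$ of Lemma \ref{lem:stableequiv} down to $r$, one sphere at a time. Your second paragraph gestures at this, but as written it does not work: there are $b_3(X)$ three-cells and only $r$ wedge spheres (with $r$ possibly much smaller than $b_3(X)$), so the attaching classes cannot all be ``transported into the enlarged free summand'' and cancelled against wedge factors as in Lemma \ref{lem:stableequiv}; and the module to which the stable range condition must be applied is $\pi_2(X)\oplus\La^r$, not the stably free module $P\oplus\La^r$. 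The uniqueness clause has the same structure and the same issue: an abstract isomorphism of $\pi_2$-modules is not enough, and the descent must be run through the polarized cancellation-with-realization machinery of \cite{hk4}.
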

\begin{proof}(Sketch) The arguments follow the same outline as those used by Bass \cite[Chap IV.3.5]{Bass:1968}
to prove a cancellation theorem for modules using elementary automorphisms. The ingredients in these arguments were generalized to apply to non-commutative noetherian rings by Magurn, van der Kallen and Vaserstein \cite{Magurn:1988}, and Stafford \cite{Stafford:1977,Stafford:1990} (see also McConnell and Robson \cite[Chap.~11]{McConnell:2001}). The application to $2$-complexes  follows by realizing elementary automorphisms by simple homotopy self-equivalences, as in \cite[\S 2]{hk4}. 
\end{proof}

\begin{remark} For $G$ finite, the integral group ring $\ZG$ has Krull dimension $d_G=1$, so the Bass stability bound would be $d_G + 1 = 2$. If $G$ is a polycyclic-by-finite group, the group ring $\ZG$ is again noetherian and $d_G = h_G + 1 $, where $h_G$ denotes the \emph{Hirsch length} of $G$ (see \cite[6.6.1]{McConnell:2001}). The examples of  \cite{Dunwoody:1976},   \cite{Harlander:2006a}, \cite{Harlander:2006}, \cite{Harlander:2011} show that for general infinite fundamental groups (for example, the fundamental group of the trefoil knot), there can be (infinitely) many distinct $2$-complexes with the same Euler characteristic.
\end{remark}

\section{The relation gap problem}
We will conclude  by mentioning a related problem. If $F/R$ is a finite presentation for a group $G$, then the action of the free group $F$ by conjugation on the normal subgroup $R$ induces an action of $G$ on the quotient abelian group $R_{ab} := R/[R.R]$. This $\ZG$-module $R_{ab}$ is called the \emph{relation module} for $G$.

 Let $d(\Gamma)$ denote the minimum number of elements needed to generate a group $\Gamma$, and if a group $Q$ acts on $\Gamma$, then let $d_Q(\Gamma)$ denote the  minimum number of $Q$-orbits needed to generate $\Gamma$. Note that $d(\Gamma) \geq d_G(\Gamma)$.

In this notation, $d_F(R)$ is the minimum number of normal generators for $R$, and $d_G(R/[R.R])$ is the minimum number of $\ZG$-module generators for the module $R_{ab}$.

\begin{definition} For a finite presentation $F/R$ of a group $G$, the \emph{relation gap} is the difference $d_F(R) - d_G(R/[R,R])$. The \emph{relation gap problem} is to decide whether there exists a finitely presentation with a positive relation gap.
\end{definition}

The survey articles of Harlander \cite{Harlander:2000,Harlander:2015} provide some key examples (such as those constructed by Bridson and Tweedale \cite{Bridson:2007a}),  and a guide to the literature. 
A connection to the D2 problem is provided by the following result:

\begin{theorem}[{Dyer \cite[Theorem 3.5]{Harlander:2000}}] Let $G$ be a group with $H^3(G; \ZG) = 0$. If there exists a   finite presentation $F/R$ with a positive relation gap, realizing the deficiency of $G$,  then the \textup{D2} property does not hold for $G$.
\end{theorem}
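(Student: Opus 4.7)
The plan is to use the positive relation gap to manufacture an algebraic $2$-complex over $\La := \ZG$ of Euler characteristic strictly less than $1-\Def(G)$, realize it as a \fake\ via Corollary~\ref{cor:algreal}, and then observe that such a \fake\ cannot be homotopy equivalent to any finite $2$-complex with fundamental group $G$, since the minimal Euler characteristic of such a $2$-complex is exactly $1-\Def(G)$.

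Let $F/R$ be the hypothesized presentation with $n$ generators and $m = d_F(R)$ relations realizing $\Def(G) = n-m$, and set $d := d_G(R_{ab})$, so that $d < m$ by the relation gap assumption. The Crowell exact sequence attached to $F/R$ reads
\[
0 \to R_{ab} \to \La^n \xrightarrow{\partial_1} \La \to \bZ \to 0,
\]
identifying $R_{ab}$ with $\ker \partial_1$. I would choose a $\La$-surjection $\varphi \colon \La^d \twoheadrightarrow R_{ab}$ from a minimal $G$-generating set of the relation module, and form $\partial_2 := \iota \circ \varphi$, where $\iota \colon R_{ab} \hookrightarrow \La^n$ is the inclusion. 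Then
\[
F_* \colon\ \La^d \xrightarrow{\partial_2} \La^n \xrightarrow{\partial_1} \La
\]
is an algebraic $2$-complex over $\La$ (exact at $\La^n$ by construction, with $H_0(F_*) = \bZ$), whose Euler characteristic is $\chi(F_*) = 1-n+d < 1-n+m = 1-\Def(G)$.

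Applying Corollary~\ref{cor:algreal} to $F_*$ now produces a \fake\ $X$ with $\pi_1(X) = G$ and $C_*(X) \simeq F_*$, so that $\chi(X) = \chi(F_*) < 1-\Def(G)$. If $G$ had the \textup{D2}-property, then $X$ would be homotopy equivalent to a finite $2$-complex $Y$ with $\pi_1(Y) = G$, forcing $\chi(Y) = \chi(X) < 1 - \Def(G)$ and contradicting the definition of $\Def(G)$. Hence $G$ does not have the \textup{D2}-property. The step I expect to be most delicate is the appeal to Corollary~\ref{cor:algreal}: the resulting \fake\ must genuinely have $\pi_1 = G$, which is ensured because the construction there begins with a fixed finite $2$-complex realizing $G$ and only alters it in dimensions $\geq 2$. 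The hypothesis $H^3(G;\ZG) = 0$ plays the analogous role in Dyer's original argument, guaranteeing that the second syzygy $\ker \partial_2$ lies in the expected stable class $\Omega^2(\bZ)$ over $\La$ so that the short algebraic resolution above can be realized geometrically without hidden higher-dimensional obstructions.
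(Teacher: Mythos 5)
The paper itself gives no proof of this theorem (it is quoted from Dyer via Harlander), so I am judging your argument on its own terms. The overall strategy is the standard one and most of it is sound: the Fox--Crowell sequence $0 \to R_{ab} \to \La^n \to \La \to \bZ \to 0$, together with a surjection $\La^d \twoheadrightarrow R_{ab}$ with $d = d_G(R_{ab}) < d_F(R) = m$, does yield an algebraic $2$-complex in the paper's sense with $\chi = 1-n+d < 1-\Def(G)$, and no connected finite $2$-complex with fundamental group $G$ can have Euler characteristic below $1-\Def(G)$ (collapse a maximal tree and read off a presentation). So if the algebraic complex is realized by a genuine \fake, the contradiction follows.

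The gap is in the realization step, and it sits exactly where the hypothesis $H^3(G;\ZG)=0$ must be used --- not where you place it. Corollary \ref{cor:algreal} is proved by attaching $3$-cells to $K \vee r(S^2)$ along a free basis of the summand $E_2 \cong \La^r$ of $\pi_2$; the proof establishes the chain homotopy equivalence $C(X)\simeq F_*$ but never verifies the cohomological \textup{D2}-condition $H^3(X;\cB)=0$ for all coefficient bundles. For a $3$-complex this condition is equivalent to $\bd_3\colon C_3 \to C_2$ being a \emph{split} injection. In the construction $\bd_3$ is the composite $\La^r \cong E_2 \hookrightarrow Z_2 \hookrightarrow C_2$; the first inclusion is split, but the obstruction to extending the projection $Z_2 \to E_2$ over $C_2$ lies in $\Ext^1_{\La}(B_1, \La^r)$, where $B_1 = C_2/Z_2$ is a second syzygy of $\bZ$, and this group is isomorphic to $H^3(G;\ZG)^r$ by dimension shifting. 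For finite $G$ --- the only case in which the paper applies the corollary --- this vanishes automatically, but Dyer's theorem concerns general $G$, so you must invoke $H^3(G;\ZG)=0$ at precisely this point rather than cite Corollary \ref{cor:algreal} as a black box in a generality where its proof is incomplete. Your closing sentence instead attributes the hypothesis to placing $\ker\bd_2$ in the stable class $\Omega^2(\bZ)$; that is not the issue ($\ker\bd_2$ is a third syzygy, and its stable class is controlled by Schanuel's Lemma with no hypothesis on $G$). Supplying the splitting argument above closes the gap and completes the proof.
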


The D2 problem can be considered a generalization of the Eilenberg-Ganea conjecture \cite{Eilenberg:1957}, which states that a group $G$ with cohomological dimension 2 also has geometric dimension 2. If $\text{cd}(G) = 2$ and  the classifying space $BG$ is homotopy equivalent to a finite complex, then
 $G$ will satisfy the Eilenberg-Ganea conjecture if  $G$ has the D2 property. 
 
 A striking result of Bestvina and Brady \cite[Theorem 8.7]{Bestvina:1997} shows that either the Eilenberg-Ganea conjecture is false, or there is a counterexample to the Whitehead conjecture, which states that every connected subcomplex of an aspherical 2-complex is aspherical.

\providecommand{\bysame}{\leavevmode\hbox to3em{\hrulefill}\thinspace}
\providecommand{\MR}{\relax\ifhmode\unskip\space\fi MR }
\providecommand{\MRhref}[2]{%
  \href{http://www.ams.org/mathscinet-getitem?mr=#1}{#2}
}
\providecommand{\href}[2]{#2}

\end{document}